\def\@abssec#1{\vspace{.05in}\footnotesize \parindent .2in
{\bf #1. }\ignorespaces}
\newtheorem{theorem}{Theorem}[section]
\newtheorem{lemma}[theorem]{Lemma}
\newtheorem{proposition}[theorem]{Proposition}
\newcommand{\dint}{\displaystyle\int}
\newcommand{\be}{\mathbf e}
\newcommand{\lb}{\label}
\allowdisplaybreaks \numberwithin{equation}{section}
\renewcommand{\be}{\begin{equation}}
\newcommand{\ee}{\end{equation}}
\newcommand{\rf}{\eqref}
\newcommand{\om}{\omega}
\newcommand{\OO}{\mathcal O}
\title[Small scale creation for 2D Euler]{Small scale creation for solutions of the incompressible two dimensional Euler equation}
\author{Alexander Kiselev}
\thanks{Department of
Mathematics, Rice University, 6100 Main MS-134, Houston TX 77005, USA;\\
email: kiselev@rice.edu}
\author{Vladimir $\check{{\rm S}}$ver\'ak}
\thanks{Department of
Mathematics, University of Minnesota, 206 Church St. SE, Minneapolis, MN 55455, USA;
email: sverak@math.umn.edu}
\begin{document}


\begin{abstract}
We construct an initial data for the two-dimensional Euler equation in a disk for which the gradient of vorticity
exhibits double exponential growth in time for all times. This estimate is known to be sharp - the double exponential growth is the fastest possible growth rate.
\end{abstract}

\subjclass[2000]{35Q31,76B03}
\keywords{Euler equation, two-dimensional incompressible ﬂow, small scale creation, vorticity gradient growth,
double exponential growth, hyperbolic ﬂow}

\maketitle

\section{Introduction}\label{intro}

The two dimensional Euler equation for the motion of an inviscid, incompressible fluid is given in vorticity form by
\begin{equation}\label{2de}
\partial_t \omega + (u \cdot \nabla) \omega =0, \,\,\, \omega(x,0)=\omega_0(x).
\end{equation}
Here $\omega$ is the vorticity of the flow, and the fluid velocity $u$ is determined from $\omega$ by the appropriate Biot-Savart law.
If we consider fluid in a smooth bounded domain $D,$ we impose a no flow condition at the boundary: $u(x,t) \cdot n(x) =0$ for $x \in \partial D.$
This implies that $u(x,t) = \nabla^\perp \int_D G_D(x,y) \omega(y,t)\,dy,$ where $G_D$ is the Green's function for the Dirichlet problem in $D$ and
it will be convenient for us to follow the convention that $\nabla^\perp = (\partial_{x_2}, -\partial_{x_1}).$
We will denote $K_D(x,y) = \nabla^\perp G_D(x,y),$ so that $u(x,t) = \int_D K_D(x,y) \omega(y,t)\,dy.$

The global regularity of solutions to two-dimensional Euler equation is known since the work of Wolibner \cite{Wolibner} and H\"older \cite{Holder},
see also \cite{Kato2}, \cite{Koch}, \cite{MP} or \cite{Chemin}
for more modern and accessible proofs. The two-dimensional Euler equation is critical in the sense that the estimates needed to obtain global regularity
barely close. The best known upper bound on the growth of the gradient of vorticity and higher order Sobolev norms is double exponential
in time. This result is well known and has first appeared in \cite{YudDE}, though related bounds can be traced back to \cite{Wolibner} and \cite{Holder}.
We will sketch an argument obtaining the double exponential bound below in Section~\ref{prelim} for the sake of completeness.

The question of whether such upper bounds are sharp has been open for a long time. Yudovich \cite{Jud1,Yud2} provided an example showing infinite
growth of the vorticity gradient at the boundary of the domain, by constructing an appropriate Lyapunov functional. These results were further
improved and generalized in \cite{MSY}, leading to description of a broad class of flows with infinite growth in their vorticity gradient.
Nadirashvili \cite{Nad1} proved a more quantitative linear in time lower bound for a ``winding" flow in an annulus. Bahouri and Chemin \cite{BC} provided an example of singular stationary
solution of the 2D Euler equation which produces a flow map whose H\"older regularity decreases in time. This example also has a fluid velocity which
is just log-Lipschitz in the spatial variables, the lack of Lipschitz regularity that is exactly related to the possibility of double exponential growth.
We refer to \cite{Koch} and \cite{MSY} for a more thorough discussion of the history of the problem and connections with classical stability questions.

In recent years, there has been a series of works by Denisov on this problem. In \cite{Den1}, he constructed an example with superlinear growth in its vorticity gradient
in the periodic case. In \cite{Den2}, he showed that the growth can be double exponential for any given (but finite) period of time.
In \cite{Den3}, he constructed a patch solution to 2D Euler equation where, under the action of a regular prescribed stirring velocity, the distance between the boundaries of two
patches decreases double exponentially in time. We also refer to a discussion at Terry Tao's blog \cite{Tao} for more information on the problem
and related questions.

Our goal in this paper is to construct an example of initial data in the disk such that the corresponding solution for the 2D Euler equation
exhibits double exponential growth in the gradient of vorticity. We do not require any force or controlled stirring in the equation. Namely, we will prove
\begin{theorem}\label{mainthm}
Consider two-dimensional Euler equation on a unit disk $D.$ There exists a smooth initial data $\omega_0$ with 
$\|\nabla \omega_0\|_{L^\infty}/\|\omega_0\|_{L^\infty} >1$ such that the corresponding
solution $\omega(x,t)$ satisfies
\begin{equation}\label{main1}
\frac{\|\nabla \omega(x,t)\|_{L^\infty}}{\|\omega_0\|_{L^\infty}} \geq \left( \frac{\|\nabla \omega_0\|_{L^\infty}}{\|\omega_0\|_{L^\infty}}\right)^{c\exp(c\|\omega_0\|_{L^\infty}t)}
\end{equation}
for some $c>0$ and for all $t \geq 0.$
\end{theorem}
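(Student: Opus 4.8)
The plan is to engineer a hyperbolic stagnation point on the boundary whose compression acts on the vorticity for all time, and to show that this compression carries an extra logarithmic factor that upgrades exponential growth to double exponential growth.

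\emph{Symmetry and geometry.} I would take $\omega_0$ odd with respect to both coordinate axes and nonnegative on the first quadrant $Q=\{x_1>0,\,x_2>0\}$. The odd-odd symmetry is preserved by the flow, so for every $t$ the two diameters along the axes are invariant streamlines on which $\omega\equiv 0$, the center is a fixed point, and the four points where the axes meet $\partial D$ are stagnation points. Fix $p=(0,1)$. Folding the four symmetric copies of the vorticity in the Biot--Savart law, one checks that on the positive $x_2$-axis the velocity points toward the center; hence that axis is the unstable manifold of $p$ and the boundary is its stable manifold, so a fluid particle starting on $\partial D\cap \overline Q$ is advected along $\partial D$ toward $p$. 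Since $\omega_0$ is bounded away from $0$ on that arc while $\omega(p,t)=0$ for all $t$ by symmetry, such a particle creates an ever steepening tangential jump in $\omega$ as it approaches $p$.

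\emph{Key lemma.} The heart of the argument is a lower bound on the speed with which the particle approaches $p$. Writing $u(x,t)=\int_D K_D(x,y)\,\omega(y,t)\,dy$ and using the method of images together with the Green's function of the disk, the singular contributions of the kernel combine, for $x$ near $p$, into
\[
u_\tau(x,t) = -c_0\,d\,\Big(I(x,t)+O(\|\omega_0\|_{L^\infty})\Big),\qquad I(x,t)=\int \frac{y_1y_2}{|y|^4}\,\omega(y,t)\,dy,
\]
where $d$ is the arc-distance from $x$ to $p$, $u_\tau$ is the tangential velocity, and the integral is over a suitable region of $Q$. The decisive point is the genuine logarithm: if $\omega\gtrsim \|\omega_0\|_{L^\infty}$ on a fixed region of $Q$ spanning the scales from $d$ up to $O(1)$, then $I(x,t)\ge c\,\|\omega_0\|_{L^\infty}\log(1/d)$. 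Establishing this logarithmic lower bound while controlling the remainder uniformly up to the boundary is, I expect, the main obstacle, since one must show that the kernel integral is not merely bounded but grows logarithmically.

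\emph{Maintenance and conclusion.} Because the key lemma requires $\omega\gtrsim\|\omega_0\|_{L^\infty}$ on a good region near $p$, I would next show that this persists for all time. As $\omega$ is only transported, I would take $\omega_0=\|\omega_0\|_{L^\infty}$ on a large part of $Q$ and argue, using incompressibility, the odd symmetry, and the fact that the inflow along $\partial D$ keeps feeding fluid of vorticity $\approx\|\omega_0\|_{L^\infty}$ toward $p$, that the set $\{\omega\ge\tfrac12\|\omega_0\|_{L^\infty}\}$ never vacates the good region (a barrier/comparison argument). Granting this, the arc-distance $d(t)$ of the tracked particle satisfies $\dot d\le -c\,\|\omega_0\|_{L^\infty}\,d\,\log(1/d)$, and integrating the resulting inequality for $\log(1/d)$ gives $d(t)\le d(0)^{\exp(c\|\omega_0\|_{L^\infty}t)}$. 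Comparing the constant value $\omega\approx\|\omega_0\|_{L^\infty}$ carried by the particle with $\omega(p,t)=0$ across the arc-distance $d(t)$ yields $\|\nabla\omega(\cdot,t)\|_{L^\infty}\ge c\,\|\omega_0\|_{L^\infty}/d(t)$, and choosing the initial transition so that $d(0)\sim\|\omega_0\|_{L^\infty}/\|\nabla\omega_0\|_{L^\infty}$ converts this into the stated double exponential bound.
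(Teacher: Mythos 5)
Your setup (a hyperbolic stagnation point where a symmetry axis meets $\partial D$) and your key lemma (the tangential velocity $\sim -d\cdot I$ with $I=\int y_1y_2|y|^{-4}\omega\,dy$) are essentially those of the paper, and the final conversion from $d(t)\le d(0)^{\exp(ct)}$ to the gradient bound is fine. (A minor orientation slip: with $\omega_0\ge 0$ on the first quadrant the induced circulation carries fluid \emph{up} the positive $x_2$-axis, so the point at which the boundary is the stable manifold is $(0,-1)$, not $(0,1)$; this is harmless.) The genuine gap is in the ``maintenance'' step, which is precisely where the whole difficulty of the theorem lies. To get $\dot d\le -c\,d\log(1/d)$ for all time you need $I(x,t)\ge c\log(1/d(t))$, i.e.\ vorticity bounded below on a sector spanning all dyadic scales from $d(t)$ up to $O(1)$, with $d(t)\to 0$ double exponentially. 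Incompressibility only controls the \emph{measure} of the bad set $\{\omega<\tfrac12\|\omega_0\|_{L^\infty}\}$; a set of measure $2\delta$ concentrated in the sector at radii $[d(t),\sqrt\delta]$ is perfectly consistent with that constraint and would reduce $I$ to the time-independent constant $c\log(1/\delta)$, yielding only single-exponential growth. Ruling out such concentration by a ``barrier'' is not available: the global circulation in the half-disk recycles the bad fluid from the strip near the axis back toward the stagnation point along the boundary, and near the hyperbolic point it is swept along hyperbolas through exactly the small scales you need to protect. Nothing in your proposal prevents this.

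The paper's resolution is a different and essential device: it tracks \emph{two} nested trajectories $a(t)<b(t)$, defined through the extremal velocities $\overline u_1,\underline u_1$ over vertical segments (\eqref{xx2}--\eqref{xx5}), so that the region $\mathcal{O}_t$ between them and below the diagonal is dynamically trapped --- no fluid can enter through any of its sides --- and hence carries $\omega\equiv 1$ forever. The lower bound on the compression at $a(t)$ then exceeds that at $b(t)$ by $\tfrac1{2\pi}\log(b(t)/a(t))$ coming from the trapped region alone, while the uncontrolled contribution $\Omega(b,b)$ from scales above $b(t)$ appears in \emph{both} \eqref{logCD} and \eqref{logAB} and cancels when one writes the Gronwall inequality \eqref{final1} for $\log a-\log b$. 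It is this cancellation, not a lower bound on $I$ spanning scales up to $O(1)$, that produces the exponential growth of $\log(b/a)$ and hence the double-exponential decay of $a(t)$. Without this (or an equivalent mechanism) your argument establishes only the exponential growth already discussed at the end of Section~\ref{keylemma}.
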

We also note that our construction yields exponential in time growth for $\|\nabla u \|_{L^\infty},$ thus answering a question asked by Koch in \cite{Koch}.

The theorem shows that the double exponential upper bound is in general optimal for the growth of vorticity gradient of solutions to the two-dimensional Euler equation.
The growth in our example happens at the boundary. We do not know if such growth is possible in the bulk of the fluid. In a few lines, the idea of the construction
can be described as follows. The Bahouri-Chemin solution in the periodic case can be thought of as a singular ``cross" stationary solution satisfying $\omega(x_1,x_2)=1$ for $0 \leq x_1,x_2 \leq 1,$
odd with respect to both coordinate axes, and periodic with period $2$ in each direction. The lines $x_1=0$ and $x_2=0$ are separatices for this flow, and the origin is a
hyperbolic point. If one considers the fluid velocity created by the cross, one can check that $u_1(x_1,0) \sim x_1 \log x_1$ for small $x_1.$ A passive scalar advected by such a flow
will generally exhibit double exponential growth of the gradient. If one could smooth out this flow, arrange for a small perturbation of it to play the role of a passive scalar on top of
singular behavior, and somehow arrange for the solution to approach the singular ``cross'' solution of the background flow, then one could provide an example of double exponential in time growth.

A similar idea was exploited in \cite{Den2} to design a finite time double exponential growth example. However, one would face serious difficulties to extend this approach to infinite time.
First, to keep the background
scenario stable, one needs symmetry - and odd symmetry bans nonzero perturbation right where the velocity is most capable of producing double exponential growth for all times, on the $x_2=0$ separatrix.
Second, it is not clear how to make a smooth solution approach the ``cross" in some suitable sense. Third, the perturbation will not be passive, and, in large time limit, will be difficult to decouple
from the equation. Our observation is that one can use the boundary to avoid dealing with the first issue and to make the second issue more manageable.
In this sense, our work goes back to the idea of Yudovich that boundaries are prone to the generation of small scales in solutions of 2D Euler equation.

\section{Preliminaries}\label{prelim}

Here we collect some background information for the sake of completeness. First, we will sketch a proof of the following theorem. The result goes back to \cite{Wolibner}, \cite{Holder}.
Throughout the paper, $C$ will denote universal constants that may change from line to line.

\begin{theorem}\label{upperb}
Let $D$ be a bounded domain with smooth boundary. Let $\omega_0(x)$ be smooth initial data for the 2D Euler equation.
Then the solution $\omega(x,t)$ satisfies
\begin{equation}\label{upperdexp}
1+\log\left(1+\frac{\|\nabla \omega(x,t)\|_{L^\infty}}{\|\omega_0\|_{L^\infty}}\right) \leq
\left(
1+\log\left(
1+
{
\|\nabla\omega_0\|_{L^\infty}
\over
\|\omega_0\|_{L^\infty}
}
\right)
\right)
\exp(
C\|\omega_0\|_{L^\infty}t
)
\end{equation}
for some constant $C$ which may depend only on the domain $D.$
\end{theorem}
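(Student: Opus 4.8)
The plan is to reduce the double-exponential bound to a Gronwall inequality for the single quantity $y(t) = 1 + \log\bigl(1 + \|\nabla\omega(\cdot,t)\|_{L^\infty}/\|\omega_0\|_{L^\infty}\bigr)$, assembled from three ingredients: conservation of the sup-norm of vorticity, a flow-map estimate controlling $\nabla\omega$ by the time integral of $\|\nabla u\|_{L^\infty}$, and a logarithmic (log-Lipschitz type) bound on $\|\nabla u\|_{L^\infty}$ in terms of $\|\omega\|_{L^\infty}$ and $\|\nabla\omega\|_{L^\infty}$. The first is immediate: since $\omega$ is transported by the divergence-free field $u$, the flow map $\Phi_t$ is measure preserving and $\omega(\Phi_t(x),t)=\omega_0(x)$, so $\|\omega(\cdot,t)\|_{L^\infty}=\|\omega_0\|_{L^\infty}$ for all $t$, and the same for every $L^p$ norm.

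For the second ingredient I would work with the flow map. Differentiating $\omega(\Phi_t(x),t)=\omega_0(x)$ in $x$ gives $(\nabla\omega)(\Phi_t(x),t)=\bigl(\nabla\Phi_t(x)\bigr)^{-T}\nabla\omega_0(x)$, while the matrix $\nabla\Phi_t$ solves $\partial_t\nabla\Phi_t=(\nabla u)(\Phi_t,t)\,\nabla\Phi_t$. Gronwall then yields $\|\nabla\Phi_t^{\pm1}\|_{L^\infty}\le \exp\bigl(\int_0^t\|\nabla u(\cdot,s)\|_{L^\infty}\,ds\bigr)$ and hence
\[
\|\nabla\omega(\cdot,t)\|_{L^\infty}\le \|\nabla\omega_0\|_{L^\infty}\,\exp\Bigl(\int_0^t\|\nabla u(\cdot,s)\|_{L^\infty}\,ds\Bigr),
\]
equivalently $\tfrac{d}{dt}|\nabla\omega|\le \|\nabla u\|_{L^\infty}|\nabla\omega|$ along characteristics.

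The crux is the logarithmic bound on the velocity gradient. Writing $u(\cdot,t)=\int_D K_D(x,y)\,\omega(y,t)\,dy$ with $K_D=\nabla^\perp G_D$, I would use $G_D(x,y)=-\tfrac{1}{2\pi}\log|x-y|+h_D(x,y)$, where $h_D$ is the domain-dependent regular part. The contribution of $h_D$ to $\nabla u$ is controlled by $\|\omega\|_{L^1}\lesssim \|\omega\|_{L^\infty}$ with a constant depending only on $D$, so the essential singular part behaves like the whole-plane kernel, whose $x$-derivative is of Calder\'on--Zygmund type with size $\lesssim |x-y|^{-2}$. To estimate $\nabla u(x)$ I would split the integral at a radius $r$: on $\{|x-y|<r\}$ I replace $\omega(y)$ by $\omega(y)-\omega(x)$ (the constant $\omega(x)$ integrating to a bounded term by the mean-zero cancellation of the kernel) and bound $|\omega(y)-\omega(x)|\le \|\nabla\omega\|_{L^\infty}|x-y|$, giving a contribution $\lesssim \|\nabla\omega\|_{L^\infty}\,r$; on $\{|x-y|\ge r\}$ the kernel is integrable against $\|\omega\|_{L^\infty}$ and yields $\lesssim \|\omega\|_{L^\infty}\log(\mathrm{diam}(D)/r)$. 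Optimizing with $r\sim \|\omega\|_{L^\infty}/\|\nabla\omega\|_{L^\infty}$ produces
\[
\|\nabla u\|_{L^\infty}\le C\,\|\omega\|_{L^\infty}\Bigl(1+\log\Bigl(1+\tfrac{\|\nabla\omega\|_{L^\infty}}{\|\omega\|_{L^\infty}}\Bigr)\Bigr),
\]
with $C=C(D)$.

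Combining the pieces, with $y(t)$ as above and $N(t)=\|\nabla\omega(\cdot,t)\|_{L^\infty}/\|\omega_0\|_{L^\infty}$, the inequality $\tfrac{dN}{dt}\le \|\nabla u\|_{L^\infty}N$ together with $\|\nabla u\|_{L^\infty}\le C\|\omega_0\|_{L^\infty}\,y$ gives $\tfrac{dy}{dt}=\tfrac{1}{1+N}\tfrac{dN}{dt}\le \|\nabla u\|_{L^\infty}\le C\|\omega_0\|_{L^\infty}\,y$, and Gronwall yields $y(t)\le y(0)\exp(C\|\omega_0\|_{L^\infty}t)$, which is exactly \rf{upperdexp}. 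The main technical obstacle is the logarithmic velocity-gradient estimate in the bounded domain: one must verify that the regular part $h_D$ of the Green's function contributes a bounded, domain-dependent amount to $\nabla u$ up to the boundary, and that the cancellation argument for the singular kernel goes through uniformly. It is precisely the fact that $\|\nabla u\|_{L^\infty}$ is controlled only logarithmically, rather than linearly, by $\|\omega\|_{L^\infty}$ that forces and is responsible for the double-exponential rate. A fully rigorous treatment would, in addition, justify the differentiation of the flow map and the $L^\infty$ transport identities, e.g.\ by first working with mollified data and passing to the limit.
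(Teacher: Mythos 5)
Your proposal is correct and follows essentially the same route as the paper: conservation of $\|\omega\|_{L^\infty}$, control of $\|\nabla\omega(\cdot,t)\|_{L^\infty}$ via the Lipschitz constant of the flow map, a Kato-type logarithmic bound on $\|\nabla u\|_{L^\infty}$ obtained by splitting the Biot--Savart integral at an optimized radius (the paper's Proposition~\ref{kato}, stated there with the $C^\alpha$ norm rather than the Lipschitz norm), and a Gronwall argument. Your final step --- applying Gronwall directly to $1+\log(1+N(t))$ --- is a slightly slicker packaging than the paper's explicit integration of the separable ODE \rf{z1}, but is equivalent; the one point to treat with the same care as the paper is the behavior of the regular part of the Green's function near $\partial D$ (handled there via the maximum principle, interior estimates for harmonic functions, and a Schauder estimate near the boundary), which you correctly flag as the main technical obstacle.
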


We will need a potential theory bound
\begin{proposition}\label{kato}
Let $u,\omega$ be velocity and vorticity solving 2D Euler equation in domain $D.$ Then for every $1>\alpha>0,$ we have
\begin{equation}\label{katoeq}
\|\nabla u(x,t)\|_{L^\infty} \leq C(\alpha,D) \|\omega_0\|_{L^\infty}\left(1+\log \frac{\|\omega(x,t)\|_{C^\alpha}}{\|\omega_0\|_{L^\infty}} \right).
\end{equation}
\end{proposition}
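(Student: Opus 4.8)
The plan is to treat $\nabla u$ as a Calderón--Zygmund singular integral of the vorticity and to exploit the fact that, although such operators are unbounded on $L^\infty$, the borderline logarithmic loss can be controlled once one pays a small amount of Hölder regularity. Differentiating the Biot--Savart representation gives $\nabla u(x,t)=\int_D \nabla_x K_D(x,y)\,\omega(y,t)\,dy$, interpreted in the principal value sense. The first step is to record the structure of the kernel: writing $G_D(x,y)=-\frac{1}{2\pi}\log|x-y|+h_D(x,y)$, where $h_D$ is harmonic in $x$ and smooth up to the boundary for smooth $\partial D$, one sees that $\nabla_x K_D(x,y)$ agrees with the whole-plane Calderón--Zygmund kernel plus a bounded correction, so that $|\nabla_x K_D(x,y)|\leq C(D)|x-y|^{-2}$ on $D\times D$, together with the usual cancellation over spheres.

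The second step is a splitting at a scale $\delta\in(0,\mathrm{diam}\,D]$ into the near region $|x-y|\leq\delta$ and the far region $|x-y|>\delta$. On the near region I would add and subtract $\omega(x,t)$; the cancellation of the singular kernel bounds the contribution of the constant piece $\omega(x,t)$ by $C(D)\|\omega(t)\|_{L^\infty}$, while the remaining difference is controlled through Hölder continuity,
\[
\left|\int_{|x-y|\leq\delta}\nabla_x K_D(x,y)\,(\omega(y,t)-\omega(x,t))\,dy\right|\leq C(D)\,\|\omega(t)\|_{C^\alpha}\int_0^\delta r^{\alpha-1}\,dr\leq \frac{C(D)}{\alpha}\,\|\omega(t)\|_{C^\alpha}\,\delta^\alpha.
\]
On the far region the $|x-y|^{-2}$ bound together with the $L^\infty$ norm yields
\[
\left|\int_{|x-y|>\delta}\nabla_x K_D(x,y)\,\omega(y,t)\,dy\right|\leq C(D)\,\|\omega(t)\|_{L^\infty}\int_\delta^{\mathrm{diam}\,D} r^{-1}\,dr\leq C(D)\,\|\omega(t)\|_{L^\infty}\,\log\frac{\mathrm{diam}\,D}{\delta}.
\]

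The final step is optimization over $\delta$. Choosing $\delta=\big(\|\omega(t)\|_{L^\infty}/\|\omega(t)\|_{C^\alpha}\big)^{1/\alpha}$ balances the two contributions: the first becomes $\lesssim \alpha^{-1}\|\omega(t)\|_{L^\infty}$, while the second produces $\alpha^{-1}\|\omega(t)\|_{L^\infty}\log(\|\omega(t)\|_{C^\alpha}/\|\omega(t)\|_{L^\infty})$. Since vorticity is transported by a volume-preserving flow, $\|\omega(t)\|_{L^\infty}=\|\omega_0\|_{L^\infty}$ for all $t$, and absorbing $1/\alpha$ and $\log\mathrm{diam}\,D$ into $C(\alpha,D)$ gives exactly \rf{katoeq}. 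If the ratio is so small that $\delta$ would exceed $\mathrm{diam}\,D$, one simply takes $\delta=\mathrm{diam}\,D$, the far term drops out, and the bound is even better.

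The step I expect to be the main obstacle is the first one: verifying uniformly up to the boundary that the domain Green's function kernel $\nabla_x K_D$ really is a Calderón--Zygmund kernel with the correct singularity and cancellation. Away from $\partial D$ this is the classical whole-plane computation, but near the boundary the correction $h_D$ and its derivatives must be controlled, which is precisely where the smoothness assumption on $\partial D$ enters and where the constant $C(\alpha,D)$ acquires its dependence on the domain. Once this kernel estimate is in hand, the splitting and optimization are routine.
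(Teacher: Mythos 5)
Your decomposition (near/far splitting at scale $\delta$, subtracting $\omega(x,t)$ in the near region, optimizing $\delta\sim(\|\omega\|_{L^\infty}/\|\omega\|_{C^\alpha})^{1/\alpha}$) is exactly the paper's strategy, and your interior argument is correct. But the step you yourself flag as ``the main obstacle'' is a genuine gap, and the specific claim you lean on to get past it is false: $\nabla_x K_D(x,y)$ is \emph{not} the whole-plane Calder\'on--Zygmund kernel plus a bounded correction uniformly up to the boundary. Writing $G_D=\frac{1}{2\pi}\log|x-y|+h(x,y)$, the correction $h$ has boundary data $-\frac{1}{2\pi}\log|z-y|$, so as $y\to\partial D$ its singularity (for the disk, the image charge at $\bar y$) approaches $y$ itself and $\partial^2_x h(x,y)$ is as singular as $|x-y|^{-2}$ when $x,y$ are both near the boundary and near each other. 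Concretely, your bound on the constant piece fails there: for $x$ at distance $d$ from $\partial D$ one has $\int_{B_\delta(x)\cap D}|\partial^2_x h(x,y)|\,dy\gtrsim\log(\delta/d)$, which is unbounded as $d\to0$; likewise the mean-zero cancellation of the flat kernel is over full circles centered at $x$, and near the boundary you only integrate over $B_\delta(x)\cap D$, which destroys the cancellation and produces another $\log(\delta/d)$. So the near-region estimate does not close for points within $O(\delta)$ of the boundary, and no amount of ``smoothness of $\partial D$'' makes the correction kernel bounded there.

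The paper's proof repairs exactly this. It proves your estimate only at points $x$ with ${\rm dist}(x,\partial D)\ge 2\delta$: there the harmonic correction $h(\cdot,y)$ is controlled for $y\in B_\delta(x)$ by the maximum principle ($|h|\le C\log\delta^{-1}$, since its boundary data is $-\frac{1}{2\pi}\log|z-y|$ with $|z-y|\ge\delta$), and interior derivative estimates for harmonic functions give $|\partial^2_x h(x,y)|\le C\delta^{-2}\log\delta^{-1}$, hence the contribution \eqref{intwo} of size $C\|\omega_0\|_{L^\infty}\log\delta^{-1}$ --- of the same order as the far-field term, so nothing is lost. For $x'$ with ${\rm dist}(x',\partial D)<2\delta$ it then picks a nearby interior point $x$ with $|x-x'|\le C(D)\delta$ and invokes the global Schauder estimate \eqref{Schauder}, $|\nabla u(x')-\nabla u(x)|\le C(\alpha,D)\delta^\alpha\|\omega\|_{C^\alpha}\le C(\alpha,D)\|\omega_0\|_{L^\infty}$ by the choice of $\delta$, to transfer the interior bound to the boundary. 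You need either this two-step interior-plus-Schauder argument or an honest boundary kernel analysis; as written, your proof establishes the proposition only at points whose distance to $\partial D$ is at least a fixed multiple of $\delta$.
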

Here and below we will use the convention that $||\om(x,t)||$ denotes the norm of the function $x\to \om(x,t)$, i.\ e.\ we view $\om$ as a function of $x$ while $t$ is viewed as a parameter. Also, we use the standard notation
$\|f(x)|_{C^\alpha}=\|f(x)\|_{L^\infty}+\sup_{x\ne y} |f(x)-f(y)|/|x-y|^\alpha.$

Since our domain $D$ is fixed, we will work with a fixed length-scale and therefore we can use expressions
such as $||\nabla\om(x,t)||_{L^\infty}/||\om_0||_{L^\infty}$ instead of the ``dimensionally balanced" expression $L ||\nabla\om(x,t)||_{L^\infty}/||\om_0||_{L^\infty}$, where $L$ is a reference length. Our reference length is simply taken to be $L=1$.

The bound \eqref{katoeq} was first proved by Kato \cite{Kato} in the whole plane case, and its three dimensional analog is a key component in the Beale-Kato-Majda criterion
for 3D Euler regularity \cite{BKM}.
Given Proposition~\ref{kato}, Theorem~\ref{upperdexp} follows.
\begin{proof}[Proof of Theorem~\ref{upperdexp}]
Let us denote by $\Phi_t(x)$ the flow map corresponding to the 2D Euler evolution:
\[ \frac{d\Phi_t(x)}{dt} = u(\Phi_t(x),t), \,\,\,\Phi_0(x)=x. \]
Then
\[ \left| \frac{\partial_t |\Phi_t(x)-\Phi_t(y)|}{|\Phi_t(x)-\Phi_t(y)|} \right| \leq \|\nabla u\|_{L^\infty} \leq \\
 C \|\omega_0\|_{L^\infty}\left(1+\log \left( 1+ \frac{\|\nabla \omega(x,t)\|_{L^\infty}}{\|\omega_0\|_{L^\infty}} \right) \right). \] 
After integration, this gives
\begin{equation}\label{twoway}
f(t)^{-1} \leq \frac{|\Phi_t(x)-\Phi_t(y)|}{|x-y|} \leq f(t),
\end{equation}
where
\[ f(t) = \exp \left(C \|\omega_0\|_{L^\infty}\int_0^t  \left(1+\log \left( 1+ \frac{\|\nabla \omega(x,s)\|_{L^\infty}}{\|\omega_0\|_{L^\infty}} \right) \right)\,ds\right). \]
Of course, the bound \eqref{twoway} also holds for $\Phi_t^{-1}.$
On the other hand,
\begin{equation}\label{vorteq}
\|\nabla \omega(x,t)\|_{L^\infty} = {\rm sup}_{x,y} \frac{|\omega_0(\Phi_t^{-1}(x))-\omega_0(\Phi_t^{-1}(y))|}{|x-y|} \leq
\|\nabla \omega_0\|_{L^\infty} {\rm sup}_{x,y} \frac{|\Phi_t^{-1}(x)-\Phi_t^{-1}(y)|}{|x-y|}.
\end{equation}
Combining \eqref{vorteq} and \eqref{twoway}, we obtain
\[ \|\nabla \omega(x,t)\|_{L^\infty} \leq \|\nabla \omega_0\|_{L^\infty} \exp \left(C  \|\omega_0\|_{L^\infty} \int_0^t \left(1+\log \left(1+ \frac{\|\nabla \omega(x,s)\|_{L^\infty}}{\|\omega_0\|_{L^\infty}} \right) \right)\,ds\right), \]
or
\[ \log \|\nabla \omega(x,t)\|_{L^\infty} \leq \log \|\nabla \omega_0\|_{L^\infty} + C \|\omega_0\|_{L^\infty}\int_0^t \left(1+\log \left(1+  \frac{\|\nabla \omega(x,s)\|_{L^\infty}}{\|\omega_0\|_{L^\infty}} \right) \right)\,ds. \]

Let
$
A=||\omega_0||_{L^\infty}\,,\,\, B=||\nabla\omega_0||_{L^\infty}\,
$
and consider the solution $y=y(t)$ of
\be\lb{z1}
{y'\over y} =C\!A\left(1+\log(1+y)\right)\,,\qquad y(0)={B\over A}=y_0\,.
\ee
By Gronwall's lemma it is enough to bound $y(t)$.
The solution of~\rf{z1} is given by
\be\lb{z2}
\int_{y_0}^{y(t)} {dy\over y\left(1+\log(1+y)\right)} = C\!At\,.
\ee
Hence
\begin{align*}
 & \log\left(1+\log(1+y(t))\right)-\log\left(1+\log(1+y_0)\right) \\ & \qquad +\int_{y_0}^{y(t)}
dy \,\left[{1\over y(1+\log(1+y))}-{1\over(1+y)(1+\log(1+y))}\right]=C\!At\,.
\end{align*}
The integrant in the last expression is positive and hence
\be\lb{z4}
1+\log(1+y(t))\le(1+\log (1+y_0)) \exp(C\!At)\,.
\ee
\end{proof}
Now we sketch the proof of Proposition~\ref{kato}.
\begin{proof}[Proof of Proposition~\ref{kato}]
Let \[ \delta = {\rm min}\left(c, \left(\frac{\|\omega_0\|_{L^\infty}}{\|\omega(x,t)\|_{C^\alpha}}\right)^{1/\alpha}\right),\] where $1>c>0$ is some fixed constant that depends on $D,$
chosen so that the set of points
$x \in D$ with ${\rm dist}(x,\partial D) \geq 2\delta$ is not empty. 
Consider first any interior point $x$ such that ${\rm dist}(x, \partial D) \geq 2\delta.$
Standard computations (see e.g. \cite{MB}) show that
\[ \nabla u(x) = P.V. \int_D \nabla K_D(x,y) \omega(y)\,dy + \frac{(-1)^{i}}{2} \omega(x)(1-\delta_{ij}). \]
The part of the integral over the complement of the ball centered at $x$ with radius $\delta$ can be estimated as
\begin{equation}\label{outpart1} \left| \int_{B_\delta^c(x)} \nabla K_D(x,y) \omega(y)\,dy \right| \leq C\|\omega_0\|_{L^\infty}
\int_{B_\delta^c(x)} |x-y|^{-2} \,dy \leq C \|\omega_0\|_{L^\infty} (1+\log \delta^{-1}), \end{equation}
where we used a well known bound $|\nabla K_D(x,y)| \leq C|x-y|^{-2}.$

Now recall that the Dirichlet Green's function is given by
\begin{equation}\label{greenf}
G_D(z,y)= \frac{1}{2\pi}\log|z-y| + h(z,y),
\end{equation}
where $h$ is harmonic in $D$ in $z$ for each fixed $y$ and has boundary values $-\frac{1}{2\pi}\log|z-y|.$
Any second order partial derivative at $z=x$ of the first summand on the right hand side of \eqref{greenf} is of the form $r^{-2}\Omega(\phi)$ where $r,\phi$ are radial variables centered at $x,$ and $\Omega(\phi)$ is mean zero.
For this part, we can write
\begin{eqnarray}
\nonumber \left| P.V. \int_{B_\delta(x)} \partial^2_{x_i x_j} \log |x-y| \omega(y)\,dy \right| = \left| \int_{B_\delta(x)} \partial^2_{x_i x_j} \log |x-y| (\omega(y)-\omega(x))\,dy \right| \\
\leq C \|\omega(x,t)\|_{C^\alpha} \int_0^\delta r^{-1+\alpha} \,dr \leq C(\alpha) \delta^\alpha \|\omega(x,t)\|_{C^\alpha} \leq C(\alpha)\|\omega_0\|_{L^\infty}   \label{inone}
\end{eqnarray}
by our choice of $\delta.$
Finally, notice that our assumptions on $x$, our choice of boundary values for $h$, and the maximum principle together guarantee that
we have $|h(z,y)| \leq C \log \delta^{-1}$ for all $y \in
B_{\delta}(x),$ $z \in D.$
Standard estimates for harmonic functions (see e.g. \cite{Evans}) give, for each fixed $y \in B_\delta(x),$
\[ |\partial^2_{x_i x_j}h(x,y) | \leq C \delta^{-4} \|h(z,y)\|_{L^1(B_\delta(x),dz)} \leq C \delta^{-2} \log \delta^{-1}. \]
This gives
\begin{equation}\label{intwo}
\left| \int_{B_\delta(x)} \partial^2_{x_i x_j} h(x,y) \omega(y,t)\,dy \right| \leq C \|\omega_0\|_{L^\infty} \log \delta^{-1}.
\end{equation}
Together, \eqref{intwo}, \eqref{inone} and \eqref{outpart1} prove the Proposition at interior points.

Now if $x'$ is such that ${\rm dist}(x', \partial D) < 2\delta,$ find a point $x$ such that ${\rm dist}(x, \partial D) \geq 2\delta$ and $|x'-x| \leq C(D)\delta.$
By the following Schauder estimate (see e.g. \cite{GT}) we have
\begin{equation}\label{Schauder} |\nabla u(x') - \nabla u(x)| \leq C(\alpha,D) \delta^\alpha \|\omega\|_{C^\alpha}. \end{equation}
At $x,$ interior bounds apply, which together with \eqref{Schauder} gives desired bound at any $x' \in D.$
\end{proof}

\section{The Key Lemma}\label{keylemma}

As the first step towards the proof of Theorem~\ref{mainthm}, let us start setting up the scenario we will be considering. From now on, let $D$ be a closed unit disk in the plane. It will be convenient for
us to take the system of coordinates centered at the lowest point of the disk, so that the center of the disk is at $(0,1).$ Our initial data $\omega_0(x)$ will be odd with respect to the vertical
axis: $\omega_0(x_1,x_2) = -\omega_0(-x_1, x_2).$ It is well known and straightforward to check that 2D Euler evolution preserves this symmetry for all times.

We will take smooth initial data $\omega_0(x)$ so that $\omega_0(x) \geq 0$ for $x_1 >0$ (and so $\omega_0(x) \leq 0$ for $x_1<0$). This configuration 
makes the origin a hyperbolic fixed point of the flow; in particular, $u_1$ vanishes on the vertical axis. Let us analyze the Biot-Savart law we have for the disk to gain insight into the structure of the velocity field. The Dirichlet Green's
function for the disk is given explicitly by
$G_D(x,y) = \frac{1}{2\pi} (\log |x-y| - \log |x-\bar{y}| -\log |y-e_2|),$ where with our choice of coordinates $\bar{y}= e_2+ (y-e_2) / |y-e_2|^2,$ $e_2=(0,1).$ Given the symmetry of $\omega,$
we have
\begin{equation}\label{BioS23}
u(x,t) = \nabla^\perp \int_D G_D(x,y)\omega(y,t)\,dy =  \frac{1}{2\pi} \nabla^\perp \int_{D^+}  \log\left(\frac{|x-y||\tilde{x}-\bar{y}|}{|x-\bar{y}||\tilde{x}-y|}\right) \omega(y,t)\,dy,
\end{equation}
where $D^+$ is the half disk where $x_1 \geq 0,$ and $\tilde{x} = (-x_1,x_2).$ The following Lemma will be crucial for the proof of Theorem~\ref{mainthm}.
Let us introduce notation $Q(x_1,x_2)$ for a region that is the intersection of $D^+$ and the quadrant $x_1 \leq y_1 < \infty,$ $x_2 \leq y_2 < \infty.$

\begin{lemma}\label{mainlemma}
Take any $\gamma,$ $\pi/2>\gamma>0.$ Denote $D^\gamma_1$ the intersection of $D^+$ with a sector $\pi/2-\gamma \geq \phi \geq 0,$ where $\phi$ is the usual angular variable.
Then there exists $\delta >0$ such that for all $x \in D^\gamma_1$ such that $|x| \leq \delta$ we have
\begin{equation}\label{velest1}
u_1(x_1,x_2,t) = - \frac{4}{\pi} x_1\int_{Q(x_1,x_2)} \frac{y_1y_2}{|y|^4} \omega(y,t)\,dy_1dy_2 + x_1 B_1(x_1,x_2,t),
\end{equation}
where $|B_1(x_1,x_2,t)| \leq C(\gamma)\|\omega_0\|_{L^\infty}.$

Similarly, if we denote $D^\gamma_2$ the intersection of $D^+$ with a sector $\pi/2\geq \phi \geq \gamma,$ then for all $x \in D^\gamma_2$ such
that $|x| \leq \delta$ we have
\begin{equation}\label{velest2}
u_2(x_1,x_2,t) = \frac{4}{\pi}x_2\int_{Q(x_1,x_2)} \frac{y_1y_2}{|y|^4} \omega(y,t)\,dy_1dy_2 + x_2 B_2(x_1,x_2,t),
\end{equation}
where $|B_2(x_1,x_2,t)| \leq C(\gamma)\|\omega_0\|_{L^\infty}.$
\end{lemma}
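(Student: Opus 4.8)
The plan is to start from the representation \rf{BioS23} and differentiate it explicitly, exploiting the exact form of the image terms to pull out the factor $x_1$ (resp. $x_2$) algebraically. Writing $\psi(x)=\frac{1}{2\pi}\int_{D^+}\log\big(|x-y||\tilde x-\bar y|/(|x-\bar y||\tilde x-y|)\big)\om(y,t)\,dy$ so that $u=\nabla^\perp\psi$, I would compute $u_1=\partial_{x_2}\psi$ by differentiating each logarithm. The two terms carrying $y$ combine via $|\tilde x-y|^2-|x-y|^2=4x_1y_1$, and the two image terms combine via $|x-\bar y|^2-|\tilde x-\bar y|^2=-4x_1\bar y_1$. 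This gives the exact identity
\[
u_1(x,t)=\frac{2x_1}{\pi}\int_{D^+}\left[\frac{y_1(x_2-y_2)}{|x-y|^2\,|\tilde x-y|^2}-\frac{\bar y_1(x_2-\bar y_2)}{|x-\bar y|^2\,|\tilde x-\bar y|^2}\right]\om(y,t)\,dy,
\]
in which $x_1$ has come out with no approximation.

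The next step identifies the leading behaviour of the bracket as $x\to0$. For fixed $y\neq0$ the first kernel tends to $-y_1y_2/|y|^4$; using that near the origin the reflection satisfies $\bar y=(y_1,-y_2)+O(|y|^2)$ (inversion in $\partial D$ agrees to first order with reflection in the tangent line $x_2=0$), the image kernel tends to $+y_1y_2/|y|^4$, so the bracket tends to $-2y_1y_2/|y|^4$. Since this limit is homogeneous of degree $-2$, the formal limit integral diverges logarithmically at the origin, and the divergence is cut off at scale $|x|$ by the $x$-dependence of the kernels; the corner region $Q(x_1,x_2)$ encodes precisely this cutoff. I would therefore set $B_1(x,t)=\frac{2}{\pi}\int_{D^+}\mathcal E(x,y)\,\om(y,t)\,dy$ with
\[
\mathcal E(x,y)=\frac{y_1(x_2-y_2)}{|x-y|^2|\tilde x-y|^2}-\frac{\bar y_1(x_2-\bar y_2)}{|x-\bar y|^2|\tilde x-\bar y|^2}+\frac{2y_1y_2}{|y|^4}\mathbf 1_{Q(x_1,x_2)}(y),
\]
so that \rf{velest1} holds identically; it then suffices to bound $|B_1|\le\frac{2}{\pi}\|\om_0\|_{L^\infty}\sup_x\|\mathcal E(x,\cdot)\|_{L^1(D^+)}$.

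The heart of the argument, and the main obstacle, is the uniform estimate $\sup_{x\in D_1^\gamma,\,|x|\le\delta}\|\mathcal E(x,\cdot)\|_{L^1(D^+)}\le C(\gamma)$. I would split $D^+$ into an inner region $|y|\le2|x|$, an intermediate annulus $2|x|\le|y|\le\delta_0$, and a bulk region $|y|\ge\delta_0$. In the annulus the three pieces of $\mathcal E$ nearly cancel and the remainder is $O(|x|/|y|^3)$, which integrates to an $x$-independent bound; in the bulk all kernels are smooth and bounded and the indicator contributes $O(1)$. The delicate part is the inner region, where each of the three terms is individually of size $|y|^{-2}$ and only their precise cancellation is integrable; here one must also control the genuine singularity of the first kernel at $y=x$, where $\om$ may have a large gradient. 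Since $|\tilde x-y|\ge2x_1$ near $y=x$, this singularity is only $|x-y|^{-1}$ and integrates to $O(x_1^{-1}|x|)=O(1)$ precisely because the sector condition forces $x_1\ge|x|\sin\gamma$, which is the source of $C(\gamma)$. One finally checks that replacing $\bar y$ by $(y_1,-y_2)$ costs only $O(\|\om_0\|_{L^\infty})$ (the correction kernel is $O(|y|^{-1})$), and that replacing the circular cutoff $\{|y|\ge|x|\}$ by $Q(x_1,x_2)$ costs $O(\|\om_0\|_{L^\infty})$ as well, since $Q\subset\{|y|\ge|x|\}$ and $\int_{\{|y|\ge|x|\}\setminus Q}|y|^{-3}\,dy=O(1)$.

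For \rf{velest2} I would compute $u_2=-\partial_{x_1}\psi$ in the same way, but now $x_2$ does not factor algebraically: the direct and image contributions are each $O(1)$ as $x\to0$, equal to $-2y_1/|y|^2$ and $+2y_1/|y|^2$ respectively, so only their sum vanishes at $x=0$. Expanding to first order in $x$, the coefficient of $x_1$ vanishes at $x=0$ while the coefficient of $x_2$ equals $-8y_1y_2/|y|^4$, which after the same logarithmic regularization yields exactly $\frac{4}{\pi}x_2\int_{Q}\frac{y_1y_2}{|y|^4}\om\,dy$. The quadratic-in-$x$ remainder and the $x_1$-linear term, which carries a bounded coefficient, are absorbed into $x_2B_2$ using that $x_1\le C(\gamma)x_2$ throughout the sector $D_2^\gamma$; the uniform integrability of the remainder is obtained by the same three-region decomposition as for $u_1$.
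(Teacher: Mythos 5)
Your argument is correct and reaches the same estimate, but it is organized differently from the paper's proof. You differentiate the four-logarithm Green's function directly and use the exact identities $|\tilde x-y|^2-|x-y|^2=4x_1y_1$ and $|x-\bar y|^2-|\tilde x-\bar y|^2=-4x_1\bar y_1$ to pull the factor $x_1$ out of $u_1$ with no approximation, after which the lemma reduces to a uniform $L^1$ bound on an explicit error kernel, handled by your inner/annulus/bulk splitting; the sector condition enters only through the lower bound $x_1\ge |x|\sin\gamma$ in the inner region, which is exactly where $C(\gamma)$ comes from. The paper instead exploits the scaling symmetry $\om(x)\to\om(x/\lambda)$, $\psi(x)\to\lambda^2\psi(x/\lambda)$ to normalize $x_1=1$ in a large disc $D_R$, disposes of the near-field contribution ($D_R^+\cap B_r$) by an elementary potential estimate, and then Taylor-expands $G_R(x,y)$ for large $|y|$ to read off the leading term $-4x_1x_2y_1y_2/|y|^4$ together with $O(|y|^{-3}+|\bar y|^{-3})$ errors, using the exact inversion identities $\bar y_1/|\bar y|^2=y_1/|y|^2$ and $\bar y_2/|\bar y|^2=-y_2/|y|^2+1/R$ where you use the tangent-line approximation $\bar y=(y_1,-y_2)+O(|y|^2)$. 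The two routes are parallel (your inner region plays the role of the paper's $B_r$, your annulus-plus-bulk plays the role of its far-field expansion), but yours buys an exact prefactor $x_1$ and works at a fixed scale, while the paper's rescaling makes the uniformity in $x$ automatic at the cost of carrying the radius $R$ through the computation. Two bookkeeping points to fix in a full write-up: the signs you assign to the individual direct and image contributions in the $u_2$ discussion are swapped (their sum, and hence the final coefficient $+\tfrac{4}{\pi}$, is right), and in the inner region the image kernel also degenerates when $x$ and $y$ both sit near $\partial D$ (since then $\bar y\approx y$), so it needs the same $|\tilde x-\bar y|\ge x_1$ treatment you give the direct kernel.
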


\noindent
\it Remarks. \rm 1. The Lemma holds more generally than in the disk; perhaps the simplest proof is for the case where $D$ is a square. \\
2. The exclusion of a small sector does not appear to be a technical artifact. The vorticity can be arranged (momentarily) in a way that the hyperbolic picture provided by the
Lemma is violated outside of $D^\gamma_1$, for example the direction of $u_1$ may be reversed near the vertical axis.

Essentially, Lemma~\ref{mainlemma} makes it possible to ensure that the flow near the origin is hyperbolic, with fluid trajectories just hyperbolas in the main term.
The speed of motion along trajectories is controlled by the nonlocal factor in \eqref{velest1}, \eqref{velest2} (denoted $\Omega(x_1,x_2,t)$ below), and this factor
is the same for both $u_1$ and $u_2.$
\begin{proof}
 The proof of the lemma can be carried out in a number of ways. Here we present a version which uses the scaling symmetry of the estimates. For $R>0$ we let $D_R=R\, D=\{Rx,\,x\in D\}$ be the disc of radius $R$ centered at $(0,R)$. We will also use the notation $D^+_R=R\, D^+$ and $D^\gamma _R=R\, D^\gamma_1$.
  We denote by $\psi$ the stream function generated by $\om$ (via the equation $\Delta \psi =\omega$ and the boundary condition $\psi|_{\partial D}=0$). For $x\in D^+$ we set
 \be\lb{e1}
 q_1(x)={u_1\over x_1}=\frac{1}{x_1}\frac{\partial \psi}{\partial x_2},\qquad q_2={u_2\over x_2}=-\frac{1}{x_2} \frac{\partial\psi}{\partial x_1} \,.
 \ee
 These quantities are of course time-dependent, but our estimates will be proved at each fixed time and therefore the dependence of the quantities on $t$ does not have to be indicated.
 For $\lambda>0$ we consider the scaling transformation
 \be\lb{e2}
 \om(x)\to \om \left({x\over\lambda}\right)\,,\quad \psi(x)\to \lambda^2\psi\left({x\over\lambda}\right)\,,\quad q_j(x)\to q_j\left({x\over \lambda}\right)\,\,,\quad j=1,2.
 \ee
 Using the transformation, we see that instead of proving~\rf{velest1} in $D_1^\gamma$, we can prove instead the same estimate in $D_R^\gamma$ (for the new quantities obtain by the re-scaling) under an additional assumption $x_1=1$, as long as the estimate will be independent of $R\ge 1$. We still use the notation $\om,\psi,q_1, q_2$ for the re-scaled quantities. Also, the inversion with respect to the disc $D_R$ will still be denoted as $y\to \bar y$, with a slight abuse of notation. (It would be more precise to use the more cumbersome $\bar y_R$ instead of $\bar y$.)

 We have
 \be\lb{e3}
 \psi(x)=\dint_{D^+_R} G(x,y)\om(y)\,dy\,,
 \ee
with $G=G_R$ given by
\be\lb{e4}
2\pi G(x,y)=\log|x-y|-\log|\tilde x-y|+\log |\tilde x-\bar y|-\log |x-\bar y|\,,\qquad \bar y =\bar y_R\,.
\ee
Let us prove estimate~\rf{velest1} in $D^+_R$, while assuming $x_1=1$. We  fix $r>0$ such that
\be\lb{e6}
r^2\ge 100(1+\tan^2 \gamma)
\ee
and will assume
\be\lb{e7}
R\ge 10 r\,.
\ee
As it is clearly enough to show the original estimate only for small $x_1$, condition \rf{e7} can be assumed without loss of generality.
We will use the usual notation $B_r=\{x\,,\,\,|x|<r\}$.

We note that the contribution to $q_1$ from the region $D_R^+\cap B_r$ is bounded by standard elliptic estimates; this can be also shown by
a direct computation using explicit formula \eqref{BioS23}.
In fact, the estimate we need is elementary, it essentially reduces to $\int_{B_r} {dx\over |x|}\le C(r)$.
We can therefore assume without loss of generality that $\om$ is supported in $D^+_R\setminus B_r$. With this assumption the calculation reduces to finding a suitable expansion of $G(x,y)=G_R(x,y)$ for large $y$.
We note that
\begin{eqnarray*}
4\pi G(x,y) & = &
\,\,\,\,\,\log\left(1-{2xy\over |y|^2}+{|x|^2\over |y|^2}\right)
-\log\left(1-{2\tilde x y\over |y|^2}+{|x|^2\over |y|^2}\right) \\
& &  -\log
\left(
1-{2x\bar y\over |\bar y|^2}
+{|x|^2\over |\bar y|^2}\right)+
\log\left(
1-{2\tilde x{\bar y}\over |\bar y|^2}
+{|x|^2\over |\bar y|^2}\right)\,\,.\\
\end{eqnarray*}
Using
\be\lb{e10}
\log(1+t)=t-{t^2\over 2}+{t^3\over 3} \varkappa(t)\,,
\ee
where $\varkappa(t)$ is an analytic function with $\varkappa(0)=1$,
we easily check that
\be\lb{e11}
\pi  G(x,y)= -{x_1y_1\over |y|^2}+{x_1\bar y_1\over |\bar y|^2} -2{x_1x_2y_1y_2\over |y|^4}+2{x_1x_2\bar y_1\bar y_2\over |\bar y|^4}+O\left({1\over |y|^3}+{1\over |\bar y|^3}\right)\,.
\ee
Here and also in the expressions below the implied constants in the $O-$notation can depend on~$\gamma$, but are uniform in $x$ once $\gamma$ is fixed and $x$ lies on the segment $x_1=1, x_2\le \tan \gamma, x\in D^+_R$.
We  calculate
\be\lb{e12}
{\bar y_1\over |\bar y|^2}
={y_1\over |y|^2}
\,,\qquad
{\bar y_2\over |\bar y|^2}=-{y_2\over |y|^2}+{1\over R}\,,
\ee
which gives
\be\lb{e13}
\pi G(x,y)=-4{x_1x_2y_1y_2\over |y|^4}+2{x_1x_2 y_1\over |y|^2 R}+O\left({1\over |y|^3}+{1\over |\bar y|^3}\right).
\ee
We can also easily check that, denoting by $S$ the segment $x_1=1, x_2\le \tan \gamma, x\in D_R$, we have
\be\lb{e14}
\pi {\partial G(x,y)\over \partial x_2}|_{x\in S}=-4{y_1y_2\over |y|^4} + 2{y_1\over |y|^2R}+ O\left({1\over |y|^3}+{1\over |\bar y|^3}\right).
\ee
In addition, it is easy to see that for $y\in D_R$ we have $|\bar y|\ge |y|$.
The proof of~\rf{velest1} now follows from the following elementary estimates:
\begin{eqnarray*}
\dint_{D_R^+\setminus B_r} \left({1\over |y|R}+ {1\over |y|^3}+{1\over |\bar y|^3} \right)\,dy & = & O(1), \qquad R\to\infty\,\,,\\
\dint_{(D^+_R\setminus Q(x))\cap(D^+_R\setminus B_r)} {y_1y_2\over |y|^4}\,\,dy & = & O(1) ,\qquad R\to\infty\,\,.
 \end{eqnarray*}
 The proof of~\rf{velest2} is  the same, except that we re-scale to  $x_2=1$ and replace~\rf{e14} by the corresponding expression for $-\pi{\partial G\over \partial x_1}$.

\end{proof}

Before proving Theorem~\ref{mainthm}, we make a simpler observation: with the aid of Lemma~\ref{mainlemma} it is fairly straightforward to find examples with exponential in time growth
of vorticity gradient. Indeed, take smooth initial data $\omega_0(x)$ which is equal to one everywhere in $D^{+}$ except on a thin strip of width equal to $\delta$ near the vertical axis $x_1=0$,
where $0<\omega_0(x)<1$ (and $\omega_0$ vanishes on the vertical axis as it must by our symmetry assumptions). Observe that due to
incompressibility, the distribution function of $\omega(x,t)$ is the same for all times. In particular,
the measure of the complement of the set where $\omega(x,t)=1$ does not exceed $2\delta$.
In this case for every $|x| <\delta,$ $x \in D^+,$ we can derive the following estimate for the integral appearing in the representation \eqref{velest1}:
\[ \int_{Q(x_1,x_2)} \frac{y_1y_2}{|y|^4} \omega(y,t)\,dy_1dy_2 \geq  \int_{2\delta}^1 \int_{\pi/6}^{\pi/3} \omega(r,\phi) \frac{\sin 2\phi}{2r}\, d\phi dr\geq
\frac{\sqrt{3}}{4}\int_{2\delta}^1 \int_{\pi/6}^{\pi/3} \frac{\omega(r,\phi)}{r} \,d\phi dr. \]
The value of the integral on the right hand side is minimal when the area where $\omega(r,\phi)$ is less than one is concentrated at small values of the radial variable.
Using that this area does not exceed $2\delta,$ we obtain
\begin{equation}\label{mainterm111}
\frac{4}{\pi} \int_{Q(x_1,x_2)} \frac{y_1y_2}{|y|^4} \omega(y,t)\,dy_1dy_2 \geq c_1 \int_{c_2 \sqrt{\delta}}^1 \int_{\pi/6}^{\pi/3} \frac{1}{r} \,d\phi dr \geq C_1 \log \delta^{-1},
\end{equation}
where $c_1,$ $c_2$ and $C_1$ are positive universal constants.

Putting the estimate \eqref{mainterm111} into \eqref{velest1}, we get that for all for $|x| \leq \delta,$ $x \in D^+$ that lie on the disk boundary, we have
\[ u_1(x,t) \leq -x_1(C_1 \log \delta^{-1} -C_2), \]
where $C_{1,2}$ are universal constants. We can choose $\delta>0$ sufficiently small so that $u_1(x,t) \leq -x_1$ for all times if $|x|<\delta.$
Due to the boundary condition on $u$, the trajectories which start at the boundary stay on the boundary for all times.
Taking such a trajectory starting at a point $x_0 \in \partial D$ with $x_{0,1} \leq \delta$, we get $\Phi^1_{t,1}(x_0) \leq x_{0,1} e^{-t}$ for this characteristic curve.
Since $\omega(x,t) = \omega(\Phi^{-1}_t(x)),$ we see that $\|\nabla \omega(x,t)\|_{L^\infty}$ grows exponentially in time if we pick $\omega_0$ which does not vanish identically
at the boundary near the origin (for example, if $\omega_0(\delta,1-\sqrt{1-\delta^2}) \ne 0$).

\section{The Proof of the Main Theorem}\label{main}

To construct examples with double exponential growth, we have to work a little harder. For the sake of simplicity, we will build our example with $\omega_0$ such that $\|\omega_0\|_{L^\infty}=1.$
\begin{proof}[Proof of Theorem~\ref{mainthm}]
We first fix some small $\gamma >0.$
Denote
\[ \Omega(x_1, x_2, t) = \frac{4}{\pi}\int_{Q(x_1,x_2)} \frac{y_1y_2}{|y|^4}\omega(y,t)\,dy_1dy_2. \]
We will take the smooth initial data like in the end of the previous section, with $\omega_0(x)=1$ for $x \in D^+$ apart from a narrow strip of width at most $\delta>0$ (with $\delta$ small) near the vertical axis where $0 \leq \omega_0(x) \leq 1.$
Then \eqref{mainterm111} holds. We will also choose $\delta$ so that $C_1 \log \delta^{-1}>100 C(\gamma)$ where $C(\gamma)$ is the constant
in the bound for the error terms $B_1,$ $B_2$ appearing in \eqref{velest1}, \eqref{velest2}.

For $0<x_1'<x_1''<1$ we  denote
\be\lb{xx1}
\OO(x_1',x_1'')=\left\{
(x_1,x_2)\in D^+\,,\,\,x_1'<x_1<x_1''\,,\,\,
x_2<x_1\right\}\,.
\ee
For $0<x_1<1$ we let
\be\lb{xx2}
{\underline u}_1(x_1,t) \quad =\quad  \min_{(x_1,x_2)\in D^+\,,\,x_2<x_1} u_1(x_1,x_2,t)
\ee
and
\be\lb{xx3}
\,\,{\overline u}_1(x_1,t) \quad = \quad \max_{(x_1,x_2)\in D^+\,,\, x_2<x_1} u_1(x_1,x_2,t)\,.
\ee
It is easy to see that these functions are locally Lipschitz in $x_1$ on $[0,1)$, with the Lipschitz constants  being locally bounded in time. Hence we can define
$a(t)$  by
\be\lb{xx4}
\dot a= \overline u_1(a,t)\,,\quad a(0)=\epsilon^{10}\,
\ee
and  $b(t)$  by
\be\lb{xx5}
\dot b = \underline u_1(b,t)\,,\quad b(0)=\epsilon\,,
\ee
where
$0<\epsilon< \delta$ is sufficiently
small, its exact value to be determined later.
Let
\be\lb{xx6}
\OO_t=\OO(a(t),b(t))\,.
\ee
At this stage we have not yet ruled out that $\OO_t$ perhaps might become empty for some $t>0$. However, it is clear from the definitions that $\OO_t$ will be non-empty at least on some non-trivial interval of time. Our estimates below show that in fact $\OO_t$ will be non-empty for all $t>0$.

 We will choose $\omega_0$ so that $\om_0=1$ on $\OO_0$ with smooth sharp (on a scale
$\lesssim \epsilon^{10}$) cutoff to zero into $D^{+}.$
This leaves some ambiguity in the definition of $\omega_0(x)$ away from $\OO_0$. We will see that it does not really matter how we define $\omega_0$ there, as long as we satisfy the conditions above.
For simplicity, one can think of $\omega_0(x)$ being just zero for $|x| < \delta$ away from a small neighborhood of $\OO_0$.
Using the estimates \eqref{velest1}, \eqref{velest2},
the estimate \eqref{mainterm111} and our choice of $\delta$ ensuring that $C_1 \log \delta^{-1} >> C(\gamma)$ we see that both $a$ and $b$ are decreasing functions of time and that near the diagonal $x_1 =x_2$ in $\{|x|<\delta\}$ we have
\begin{equation}\label{diag23} \frac{x_1(\log \delta^{-1}-C)}{x_2(\log \delta^{-1}+C)} \leq \frac{-u_1(x_1,x_2)}{\,\,\,\,u_2(x_1,x_2)} \leq \frac{x_1(\log \delta^{-1}+C)}{x_2(\log \delta^{-1}-C)}. \end{equation}
 This means that all particle trajectories for all times are directed into the $\phi > \pi/4$ region on the diagonal.
We claim that $\omega(x,t)=1$ on $\OO_t$. Indeed, it is clear that the ``fluid particles" which at $t=0$ are in $D^+\setminus\overline \OO_0$ cannot enter $\OO_{t'}$
through the diagonal $\{x_1=x_2\}$ due to~\eqref{diag23} at any time $0\le t'\le t$. Due to the very definition of $a(t), b(t)$ and $\OO_t$, they cannot enter $\OO_{t'}$ through the vertical segments $\{(a(t'), x_2)\in D^+\,,\,\,x_2<a(t')\}$ or
$\{(b(t'),x_2)\in D^+\,,\,\,x_2<b(t')\}$ at any time $0\le t'\le t$ either.  Finally, they obviously cannot enter through the boundary points of $D$. Hence the ``fluid particles" in $\OO_t$ must have been in $\OO_0$  at the initial time and we conclude that $\om(\,\cdot\, ,t)=1$ in $\OO_t$ from the Helmholtz law, or simply the vorticity equation $\om_t+(u \cdot \nabla) \om=0$.

By Lemma~\ref{mainlemma}, we have
\[ \underline u_1(b(t),t) \geq -b(t)\, \Omega(b(t), x_2(t)) - C \,b(t), \]
for some $x_2(t)\le b(t)\,,\,\,(x_2(t),b(t))\in  D^+$
as  $\|\omega(x,t)\|_{L^\infty} \leq 1$ by our choice of the initial datum $\om_0$.
A simple calculation shows that \[ \Omega(b(t), x_2(t)) \leq \Omega(b(t), b(t)) + C. \]
Indeed, since $x_2(t) \leq b(t)$ we can write
\begin{equation}\label{aux22}
 \int_{b}^2 \int_0^{b} \frac{y_1 y_2}{|y|^4}\,dy_2dy_1 =  {1\over 2}\int_{b}^2 y_1\left( \frac{1}{y_1^2} - \frac{1}{y_1^2+b^2} \right) \,dy_1 \leq b^2 \int_{b}^2 y_1^{-3}dy_1 \leq C.
\end{equation}
Thus we get
\begin{equation}\label{keyestCD}
\underline u_1(b(t),t) \geq -b(t)\, \Omega(b(t),b(t)) - C \,b(t).
\end{equation}

At the same time, for suitable $\tilde x_2(t)$ with $\tilde x_2(t)\le a(t)\,,\,\,(a(t),\tilde x_2(t))\in \bar D$ we have
\[ \overline u_1(a(t),t) \leq -a(t) \,\Omega(a(t), \tilde x_2 (t))+\tilde C a(t) \leq -a(t) \,\Omega(a(t), 0)+Ca(t), \]
by an estimate similar to \eqref{aux22} above.
Observe that
\[ \Omega(a(t), 0) \geq \frac{4}{\pi} \int_{\OO_t} \frac{y_1y_2}{|y|^4} \omega(y,t)\,dy_1dy_2 + \Omega(b(t), b(t)). \]
Since $\omega(y,t)=1$ on $\OO_t,$
\begin{eqnarray*}  \int_{\OO_t } \frac{y_1y_2}{|y|^4} \omega(y,t)\,dy_1dy_2 \geq  \int_{\epsilon}^{\pi/4} \int_{a(t)/\cos \phi}^{b(t)/\cos \phi}\frac{\sin 2\phi}{2r} \, dr d\phi > \\
 \frac18 (-\log a(t) + \log b(t) ) - C. \end{eqnarray*}
Therefore
\begin{equation}\label{keyestAB}
\overline u_1(a(t),t) \leq - \,a(t)\left({1\over 2\pi} (-\log a(t) + \log b(t) ) +\Omega(b(t), b(t)) \right) + C a(t).
\end{equation}

Note that from estimates \eqref{keyestCD}, \eqref{keyestAB} it follows that $a(t)$ and $b(t)$ are monotone decreasing in time, and by
finiteness of $\|u\|_{L^\infty}$ these functions are Lipschitz in $t$. Hence we have sufficient regularity for the following calculations.

\begin{eqnarray}\label{logCD}
\frac{d}{dt} \log b(t) \geq -\Omega(b(t), b(t)) - C\,, \\
\label{logAB}
\frac{d}{dt} \log a(t) \leq \frac{1}{2\pi}\left(\log a(t)-\log b(t)\right)- \Omega(b(t), b(t)) + C.
\end{eqnarray}

Subtracting \eqref{logCD} from \eqref{logAB}, we obtain
\begin{equation}\label{final1}
\frac{d}{dt} \left( \log a(t) - \log b(t) \right) \leq \frac{1}{2\pi} \left(\log a(t)-\log b(t)\right) +2C.
\end{equation}
From \eqref{final1}, the Gronwall lemma leads to
\begin{equation}\label{final2}
\log a(t) - \log b(t) \leq  \log \left( a(0)/ b(0)\right) \exp(t/2\pi) + C \exp(t/2\pi) \leq (9\log \epsilon +C) \exp(t/2\pi).
\end{equation}
We should choose our $\epsilon$ so that $-\log \epsilon$ is larger than the constant $C$ that appears in \eqref{final2}. In this case, we obtain from \eqref{final2} that
$\log a(t) \leq 8 \exp(t/2\pi) \log \epsilon,$ and so $a(t) \leq \epsilon^{8\exp(t/2\pi)}.$ Note that by the definition of $a(t)$ the first coordinate of the characteristic,
that originates at the point on $\partial D$ near the origin with $x_1= \epsilon^{10},$ does not exceed $a(t).$
To arrive at \eqref{main1}, it remains to note that
we can arrange $\|\nabla \omega_0\|_{L^\infty} \lesssim \epsilon^{-10}.$
\end{proof}

\noindent
{\it Remarks}
1. It is clear from the proof that the double exponential growth in our example is fairly robust. In particular,
it will be present for any initial data in a sufficiently small $L^\infty$ ball around $\omega_0$.
Essentially, what we need for the construction to work is symmetry, the dominance of $\Omega$ terms in Lemma~\ref{mainlemma},
and some additional structure of vorticity near the boundary.\\
 2. What can be said about the long-time behavior of the solutions away from the point $(0,0)$? This is a difficult question related to the general problem of the long-time behavior of 2d Euler solutions.
 Plausible conjectures can be made using methods of Statistical Mechanics, see for example~\cite{Miller1990, Robert1991, Shnirelman1993, MW2006}, in which one relies on conserved quantities and ergodicity-type
(mixing) assumptions (rather than the actual dynamics). Such assumptions are notoriously difficult to verify. In our situation this approach (conjecturally) predicts that as $t\to\infty$, the vorticity field $\om(x,t)$ should weakly$^*$ approach a steady-state solution, which - under our symmetry assumptions - can be expected to have a discontinuity along the axis of symmetry $\{x_1=0\}$. More specific predictions might depend on details of a particular model one would use. Most models would likely lead to a steady solution with two counter-rotating vortices, symmetric about the axis $\{x_1=0\}$. Deciding rigorously whether the actual dynamics will  follow such predictions seems to be beyond reach of existing methods.

\section{Discussion}\label{disc}

This paper proves that the double exponential in time upper bound on the growth of the vorticity gradient is sharp for the 2D Euler equation on a disk.
The growth in our example happens at the boundary. The scenario we provide is fairly robust and insensitive to small perturbations of the initial data within the same
symmetry class.
It looks likely that it should be possible to generalize our example yielding fast small scale creation to domains with smooth boundary
which possess an axis of symmetry, or to domains that lack a symmetry axis but have corners. The rate of growth may depend on the geometry of the domain.
Whether such examples can be constructed for an arbitrary domain with smooth
boundary is an interesting open question, though given the elements of robustness in the construction it is tempting to guess that exponential and double exponential growth
should be fairly common in domains with boundary. The question of whether double exponential growth of the vorticity gradient can happen in the bulk of the fluid in periodic or
full plane case also remains open. 

The example constructed in this paper has been inspired by numerical simulations of Tom Hou and Guo Luo \cite{HouLuo} who propose a new scenario for the development of a finite time
singularity in solutions to the 3D Euler equation at a boundary. Our construction here implies that double exponential in time growth in derivatives of vorticity is certainly possible
in 3D Euler; one does not need any growth in the amplitude of vorticity to achieve that. Any further growth in the 3D case must come from more complex nonlinear interactions, and
by the Kato-Beale-Majda criterion must involve infinite growth of vorticity. Controlling such solutions is a challenge.

\vspace{0.1cm}

\noindent {\bf Acknowledgement.} \rm AK acknowledges partial support of the NSF-DMS grants 1104415 and 1159133, and of Guggenheim fellowship.
VS acknowledges partial support of the NSF-DMS grants 1101428 and 1159376.
We would like to thank Tom Hou and Guo Luo for describing to us their numerical work, which led to the idea of the scenario presented here.
This work has been stimulated by the NSF-FRG Summer School on Partial Differential Equations at Stanford, August 2013, and the hospitality of Stanford University
is gratefully acknowledged. We would like to thank the
anonymous
 referees for many useful suggestions. 


\begin{thebibliography}{99}

\bibitem{BC} H. Bahouri, J.-Y. Chemin, \it \'Equations de transport relatives \'a des champs de vecteurs nonLipschitziens et m\'ecanique des
uides. (French) [Transport equations for non-Lipschitz vector
Fields and fluid mechanics], \rm  Arch. Rational Mech. Anal., {\bf 127} (1994), no. 2, 159--181

\bibitem{BKM}
J. T. Beale, T. Kato, and A. Majda,  \it Remarks on the breakdown of smooth solutions for the 3-D Euler equations, \rm
Commun. Math. Phys., {\bf 94}, pp. 61--66, 1984

\bibitem{Chemin} J.-Y. Chemin, \it Perfect Incompressible Fluids, \rm Clarendon press, Oxford, 1998

\bibitem{Den1} S. Denisov, Infinite superlinear growth of the gradient for the two-dimensional Euler equation,
Discrete Contin. Dyn. Syst. A, 23 (2009), no. 3, 755--764

\bibitem{Den2} S. Denisov, \it
Double-exponential growth of the vorticity gradient for the two-dimensional Euler equation, \rm  to appear in Proceedings of the AMS

\bibitem{Den3} S. Denisov, \it
 The sharp corner formation in 2D Euler dynamics of patches: infinite double exponential rate of merging, \rm preprint arXiv:1201.2210

\bibitem{Evans} L.C.~Evans, \it Partial Differential Equations, \rm Graduate Studies in Mathematics {\bf 19}, AMS, Providence, Rhode Island

\bibitem{GT} D.~Gilbarg and N.~Trudinger, \it Elliptic Partial Differential Equations of Second Order, \rm Springer-Verlag, Berlin Heidelberg 2001

\bibitem{Holder} E.~H\"older, \it \"Uber die unbeschr\"ankte Fortsetzbarkeit einer stetigen ebenen Bewegung in einer
unbegrenzten inkompressiblen Fl\"ussigkeit, \rm Math. Z. {\bf 37} (1933), 727?738

\bibitem{HouLuo} T.~Hou and G.~Luo, \it Potentially Singular Solutions of the 3D Incompressible Euler Equations, \rm preprint arXiv:1310.0497

\bibitem{Jud1} V. I. Judovic, \it The loss of smoothness of the solutions of Euler equations with time (Russian), \rm
Dinamika Splosn. Sredy, Vyp. {\bf 16}, Nestacionarnye Problemy Gidrodinamiki, (1974), 71--78,
121

\bibitem{Kato2} T.~Kato, \it On classical solutions of the two-dimensional non-stationary Euler equation, \rm
Arch. Rat. Mech. {\bf 27} (1968), 188--200

\bibitem{Kato} T.~Kato, \it
Remarks on the Euler and Navier-Stokes equations in R2.  \rm Nonlinear functional analysis and its applications, Part 2 (Berkeley, Calif., 1983), 1--7,
Proc. Sympos. Pure Math., {\bf 45}, Part 2, Amer. Math. Soc., Providence, RI, 1986

\bibitem{Koch} H.~Koch, \it Transport and instability for perfect fluids, \rm Math. Ann. {\bf 323} (2002), 491--523

\bibitem{MB} A.~Majda and A.~Bertozzi, \it Vorticity and
Incompressible Flow, \normalfont Cambridge University Press, 2002

\bibitem{MW2006} A.~Majda and X.~Wang, \it
Nonlinear Dynamics and Statistical Theories for Basic Geophysical Flows, \rm Cambridge University Press 2006.

\bibitem{MP}  C. Marchioro and M. Pulvirenti, \it Mathematical Theory of Incompressible Nonviscous Fluids, \rm
Applied Mathematical Sciences, {\bf 96}, Springer-Verlag, New York, 1994

\bibitem{Miller1990}
J.~Miller,
\it Statistical mechanics of {E}uler equations in two dimensions, \rm
 Phys. Rev. Lett.  65(17):2137--2140, 1990.

\bibitem{MSY} A.~Morgulis, A.~Shnirelman and V.~ Yudovich, \it Loss of smoothness and inherent instability of 2D inviscid fluid flows,
\rm  Comm. Partial Differential Equations {\bf 33} (2008), no. 4-6, 943-?968.

\bibitem{Nad1} N. S. Nadirashvili, Wandering solutions of the two-dimensional Euler equation, (Russian)
Funktsional. Anal. i Prilozhen., 25 (1991), 70--71; translation in Funct. Anal. Appl., 25 (1991),
220--221 (1992)

\bibitem{Robert1991} R.~Robert,
\it A maximum-entropy principle for two-dimensional perfect fluid
  dynamics.
\rm J. Statist. Phys. 65(3-4):531--553, 1991.

\bibitem{Shnirelman1993}
A.~I.~Shnirelman,
\it Lattice theory and flows of ideal incompressible fluid,
\rm Russian J. Math. Phys. 1(1):105--114, 1993.


\bibitem{Tao} T.~Tao, http://terrytao.wordpress.com/2007/03/18/why-global-regularity-for-navier-stokes-is-hard/
post by Nets Katz on 20 March, 2007 at 12:21 am and the following thread

\bibitem{Wolibner} W. Wolibner, \it Un theor\`eme sur l'existence du mouvement plan d'un
uide parfait, homog\`ene,
incompressible, pendant un temps infiniment long (French), \rm Mat. Z., {\bf 37} (1933), 698--726

\bibitem{Yud2} V. I. Yudovich, \it On the loss of smoothness of the solutions of the Euler equations and the
inherent instability of
ows of an ideal
fluid, \rm  Chaos, {\bf 10} (2000), 705--719

\bibitem{YudDE} V. I.~Yudovich, \it The flow of a perfect, incompressible liquid through a given region, \rm
Sov. Phys. Dokl. {\bf 7} (1963), 789--791

\end{thebibliography}
\end{document}